\newtheorem{theorem}{Theorem}[subsection]
\newtheorem{thmy}{Theorem}
\newtheorem{corollary}[theorem]{Corollary}
\def\0{\leqno}
\title{How large a union of proper subgroups\\ of a finite group can be?}
\author{Marius T\u arn\u auceanu}
\date{September 19, 2019}
\begin{document}
\maketitle

\begin{abstract}
    Let $k$ be a positive integer and $G$ be a finite group that cannot be written as the union of
    $k$ proper subgroups. In this short note, we study the existence of a constant $c_k\in (0,1)$
    such that $|\cup_{i=1}^k H_i|\leq c_k|G|$, for all proper subgroups $H_1$, ..., $H_k$ of $G$.
\end{abstract}

\noindent{\bf MSC (2010):} 20E07.

\noindent{\bf Key words:} finite groups, union of subgroups.

\section{Introduction}

A well-known elementary result of group theory states that a group
cannot be written as the union of two proper subgroups. In Scorza
\cite{5} the groups which are the union of three proper subgroups
have been characterized. The analogous problems with three replaced
by four, five and six subgroups were solved by Cohn \cite{2}, while
the case of seven subgroups was studied by Tomkinson \cite{4}. Note
that an excellent survey on this topic is Bhargava \cite{1}.

Following Cohn's notation, for a group $G$ we will write
$\sigma(G)=n$ whenever $G$ is the union of $n$ proper subgroups,
but is not the union of any smaller number of proper subgroups. By
using this notation, we first recall the above mentioned results.

\bigskip\noindent{\bf Theorem 1.1.} {\it Let $G$ be a group. Then
\begin{itemize}
\item[\rm a)]
        $\sigma(G) \notin \{1, 2\}${\rm ;}
\item[\rm b)]
        $\sigma(G)=3$ if and only if $G$ has at least two subgroups of index $2$, or equivalently $G$ has a quotient isomorphic to $C_2 \times C_2${\rm ;}
\item[\rm c)]
        $\sigma(G)=4$ if and only if $\sigma(G) \neq 3$ and $G$ has at least two subgroups of index $3$, or equivalently $G$ has a quotient isomorphic to $C_3 \times C_3$ or $S_3${\rm ;}
\item[\rm d)]
        $\sigma(G)=5$ if and only if $\sigma(G) \notin \{3, 4\}$ and $G$ has a maximal subgroup of index $4$, or equivalently $G$ has a quotient isomorphic to $A_4${\rm ;}
\item[\rm e)]
        $\sigma(G)=6$ if and only if $\sigma(G) \notin \{3, 4, 5\}$ and $G$ has a quotient isomorphic to $C_5 \times C_5$, $D_5$ or to the group of order {\rm 20} with the following presentation $\langle a,b \mid a^5 = b^4=1, \hspace{1mm}ba=a^2b \rangle${\rm;}
\item[\rm f)]
        $\sigma(G)\neq 7$.
\end{itemize}}

Inspired by these results, the following problem is natural: given a positive integer $k$ and a finite group $G$ with $\sigma(G)>k$, how large can be a union of $k$ proper subgroups of $G$? In other words, is there a constant $c_k\in (0,1)$ such that $|\cup_{i=1}^k H_i|\leq c_k|G|$, for all proper subgroups $H_1$, ..., $H_k$ of $G$? Obviously, we have $c_1=\frac{1}{2}\,$. In the current note, we will prove that $c_2=\frac{3}{4}\,$ and $c_3=\frac{5}{6}\,$, the general case remaining open. Also, for an arbitrary $k$ we will formulate a conjecture about the maximum number of elements in a union of $k$ proper subgroups of $G$.

Most of our notation is standard and will usually not be repeated here. For basic notions and results on groups we refer the reader to \cite{3}.

\section{Main results}

\subsection{The case $k=2$}

In this case we can take $c_2=\frac{3}{4}\,$, as shows the following theorem.

\begin{theorem}
If $G$ is a finite group, then $|H_1\cup H_2|\leq\frac{3}{4}\,|G|$ for all proper subgroups $H_1$ and $H_2$ of $G$. Moreover,
the equality holds if and only if $H_1$ and $H_2$ are distinct maximal subgroups of index $2$.
\end{theorem}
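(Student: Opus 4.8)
The plan is to combine inclusion–exclusion with the product formula for subgroups; in particular, no case analysis (such as whether one of $H_1,H_2$ contains the other) will be needed. Write $n=|G|$. For any subgroups $H_1,H_2\le G$ we have $|H_1\cup H_2|=|H_1|+|H_2|-|H_1\cap H_2|$, and since the product set $H_1H_2$ lies in $G$ and satisfies $|H_1H_2|=|H_1|\,|H_2|/|H_1\cap H_2|$, we obtain $|H_1\cap H_2|\ge |H_1|\,|H_2|/n$. Hence
\[
|H_1\cup H_2|\ \le\ |H_1|+|H_2|-\frac{|H_1|\,|H_2|}{n}.
\]

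Next I would normalize. Set $x=|H_1|/n$ and $y=|H_2|/n$; since $H_1$ and $H_2$ are proper, $[G:H_i]\ge 2$, so $x\le\frac12$ and $y\le\frac12$. The inequality above rewrites as
\[
|H_1\cup H_2|\ \le\ n\bigl(x+y-xy\bigr)\ =\ n\bigl(1-(1-x)(1-y)\bigr).
\]
Since $1-x\ge\frac12$ and $1-y\ge\frac12$, we get $(1-x)(1-y)\ge\frac14$, and therefore $|H_1\cup H_2|\le\frac34 n$, which is the claimed bound.

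For the equality statement I would trace the two inequalities backwards. Equality forces $(1-x)(1-y)=\frac14$ while $1-x,1-y\ge\frac12>0$, so each factor must equal $\frac12$, i.e. $[G:H_1]=[G:H_2]=2$ (and a subgroup of index $2$ is automatically maximal). Equality also forces $|H_1H_2|=n$. If $H_1=H_2$ this fails, since then $H_1H_2=H_1\neq G$; and if $H_1\neq H_2$, then both being normal of index $2$ makes $H_1H_2$ a subgroup properly containing the maximal subgroup $H_1$, so $H_1H_2=G$, whence $|H_1\cap H_2|=n/4$ and $|H_1\cup H_2|=\frac{n}{2}+\frac{n}{2}-\frac{n}{4}=\frac{3n}{4}$. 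Thus equality holds exactly for distinct (maximal) index-$2$ subgroups. I do not expect a genuine obstacle here; the only point requiring a little care is this equality analysis, where one must notice that $x=y=\frac12$ is the unique way to make $(1-x)(1-y)$ equal to $\frac14$ on $[0,\frac12]^2$, and then use the condition $H_1H_2=G$ to rule out the degenerate case $H_1=H_2$.
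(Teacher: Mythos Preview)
Your proof is correct and follows essentially the same approach as the paper: inclusion--exclusion combined with the product formula $|H_1H_2|=|H_1|\,|H_2|/|H_1\cap H_2|\le |G|$, then normalizing by $|G|$ and using $x,y\le\tfrac12$. The only cosmetic difference is that you bound $x+y-xy$ via the factorization $1-(1-x)(1-y)$, whereas the paper bounds it sequentially as $n_1(1-n_2)+n_2\le\tfrac{1-n_2}{2}+n_2\le\tfrac34$; your equality analysis is also slightly more explicit in verifying the converse direction.
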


\begin{proof}
Let $H_1$ and $H_2$ be two proper subgroups of $G$. Then
$$|H_1\cup H_2|=|H_1|+|H_2|-|H_1\cap H_2|=|H_1|+|H_2|-\frac{|H_1||H_2|}{|H_1H_2|}$$
$$\hspace{-28mm}\leq |H_1|+|H_2|-\frac{|H_1||H_2|}{|G|}\,.$$If we denote $n_i=\frac{|H_i|}{|G|}\,$, then $n_i\in(0,\frac{1}{2}]$, and the above inequality leads to
$$\frac{|H_1\cup H_2|}{|G|}\leq n_1+n_2-n_1n_2=n_1(1-n_2)+n_2$$
$$\hspace{25mm}\leq\frac{1-n_2}{2}+n_2=\frac{1+n_2}{2}\leq\frac{1+\frac{1}{2}}{2}=\frac{3}{4}\,,$$as desired.

Clearly, the equality holds if and only if $n_1=n_2=\frac{1}{2}\,$ and $H_1H_2=G$, that is $H_1$ and $H_2$ are distinct maximal subgroups of index $2$. Note that in this case we have $\sigma(G)=3$ by Theorem 1.1, b).
\end{proof}

\subsection{The case $k=3$}

In this case we can take $c_3=\frac{5}{6}\,$, as shows the following theorem.

\begin{theorem}
If $G$ is a finite group with $\sigma(G)\neq 3$, then $|H_1\cup H_2\cup H_3|\leq\frac{5}{6}\,|G|$ for all proper subgroups $H_1$, $H_2$ and $H_3$ of $G$. Moreover, the equality holds if and only if $H_1$, $H_2$, $H_3$ are distinct maximal subgroups, two of index $3$ and one of index $2$.
\end{theorem}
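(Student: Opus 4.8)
The plan is to reduce to counting cosets using the same inclusion–exclusion idea as in the $k=2$ case, but now I must be more careful because three-set inclusion–exclusion is not monotone in the obvious way. First I would fix proper subgroups $H_1,H_2,H_3$ and set $n_i=|H_i|/|G|\in(0,\tfrac12]$. By Theorem 1.1 a)–b), since $\sigma(G)\neq 3$, $G$ is not the union of two proper subgroups, so in particular no two of the $H_i$ already cover $G$; more importantly, if some $H_i$ has index $2$ then $G$ has at most one subgroup of index $2$. The key structural point I would extract first: if any $H_i$ has index $2$, then the other two subgroups are contained in it or meet it in index $\le 2$ subgroups of themselves, and one can bound $|H_1\cup H_2\cup H_3|\le |H_i|+\tfrac12(|G|-|H_i|)=\tfrac12|G|+\tfrac12|H_i|\le \tfrac34|G|$ — wait, that is already too strong, so the extremal configuration must have \emph{no} two subgroups of index $2$, consistent with the claimed equality case having exactly one subgroup of index $2$ and two of index $3$. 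So I would split into cases according to $\max_i [G:H_i]$.

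\medskip\noindent\textbf{Case A: all three indices are $\ge 3$.} Then each $n_i\le\tfrac13$. Using $|H_i\cap H_j|\le \tfrac{|H_i||H_j|}{|G|}$ and discarding the triple-intersection term (it only helps), inclusion–exclusion gives
$$\frac{|H_1\cup H_2\cup H_3|}{|G|}\le n_1+n_2+n_3-n_1n_2-n_1n_3-n_2n_3+n_1n_2n_3,$$
and I would show the right-hand side, as a function on $(0,\tfrac13]^3$, is maximized at the corner $n_1=n_2=n_3=\tfrac13$, where it equals $3\cdot\tfrac13-3\cdot\tfrac19+\tfrac1{27}=\tfrac{19}{27}<\tfrac56$. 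Monotonicity in each variable separately (fixing the other two, the expression is affine in $n_i$ with slope $1-n_j-n_k+n_jn_k=(1-n_j)(1-n_k)>0$) makes this immediate. So Case A is strictly below $\tfrac56$.

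\medskip\noindent\textbf{Case B: exactly one index equals $2$.} Say $[G:H_1]=2$, $n_1=\tfrac12$, and $n_2,n_3\le\tfrac13$. Here I must avoid the crude bound above because it is not tight; instead I would write $H_1\cup H_2\cup H_3 = H_1\cup\big((H_2\cup H_3)\setminus H_1\big)$, so $|H_1\cup H_2\cup H_3|\le \tfrac12|G|+|(H_2\cup H_3)\setminus H_1|$. Now $(H_2\setminus H_1)$ is a union of nontrivial cosets of $H_2\cap H_1$ in $H_2$; since $H_1$ has index $2$, either $H_2\subseteq H_1$ (contributing $0$) or $|H_2\setminus H_1|=\tfrac12|H_2|\le\tfrac16|G|$, and likewise for $H_3$. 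If at most one of $H_2,H_3$ is not contained in $H_1$ we get $\le\tfrac12+\tfrac16=\tfrac23$. If both escape $H_1$, then $|(H_2\cup H_3)\setminus H_1|\le|H_2\setminus H_1|+|H_3\setminus H_1|-|(H_2\cap H_3)\setminus H_1|$; bounding the first two by $\tfrac16|G|$ each gives $\le\tfrac12+\tfrac13=\tfrac56$, with equality forcing $|H_2|=|H_3|=\tfrac13|G|$ and $(H_2\cap H_3)\setminus H_1=\emptyset$, i.e. $H_2\cap H_3\subseteq H_1$, and additionally $H_1,H_2,H_3$ maximal (if say $H_2$ were contained in a proper subgroup $K$ of index $\ge 2$, replacing $H_2$ by $K$ only enlarges the union, and then we'd need $K$ of index exactly $3$, forcing $H_2=K$). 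This recovers precisely the asserted equality configuration.

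\medskip\noindent\textbf{Case C: two indices equal $2$.} As sketched above, if $[G:H_1]=[G:H_2]=2$ with $H_1\neq H_2$, then $G$ has at least two subgroups of index $2$, so $\sigma(G)=3$ by Theorem 1.1 b), contradicting the hypothesis. (And $H_1=H_2$ reduces to a two-subgroup union, handled by Theorem 2.1.1, giving $\le\tfrac34<\tfrac56$.) So Case C does not arise.

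\medskip The main obstacle I anticipate is Case B: one genuinely needs the index-$2$ subgroup to be treated separately from the inclusion–exclusion bound, because the naive product bound $|H_1\cap H_2|\le n_1n_2|G|$ together with $n_1=\tfrac12$ is not sharp enough to pin down equality — it is the coset-counting argument "$H_2\setminus H_1$ is half of $H_2$" that both gives the correct $\tfrac56$ and identifies the extremal case. I would also need a short argument that in the equality case all three subgroups are maximal and distinct, which follows by the replacement/enlargement observation together with Theorem 1.1 c) to rule out quotients that would force $\sigma(G)\in\{3,4\}$ inconsistently; this bookkeeping is routine but must be done carefully to match the clean statement of the theorem.
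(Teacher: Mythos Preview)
Your overall case split is sound, and Cases~B and~C are correct; the coset argument in Case~B is essentially the paper's treatment of the subcase where the largest subgroup has index~$2$, just phrased via $|H_j\setminus H_1|=\tfrac12|H_j|$ rather than the product formula. The equality analysis there also matches the paper's.

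Case~A, however, has a real error. You invoke $|H_i\cap H_j|\le |H_i||H_j|/|G|$, but the product formula $|H_i||H_j|=|H_i\cap H_j|\,|H_iH_j|$ together with $|H_iH_j|\le|G|$ gives the \emph{opposite} inequality $|H_i\cap H_j|\ge n_in_j|G|$. Your displayed bound $1-(1-n_1)(1-n_2)(1-n_3)$ is therefore not an upper bound for $|H_1\cup H_2\cup H_3|/|G|$. Concretely, in $G=C_3\times C_3$ (which has $\sigma(G)=4\ne3$) take $H_1,H_2,H_3$ to be three of the four subgroups of order~$3$; their union has $7$ elements, so the ratio is $7/9$, strictly larger than your claimed $19/27$. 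The triple-intersection term does not ``only help'' either: it carries a positive sign in inclusion--exclusion and you have no upper bound for it in terms of the $n_i$.

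The repair is to combine one pairwise term with the triple term \emph{before} estimating: from
\[
-|H_1\cap H_2|+|H_1\cap H_2\cap H_3|=-|(H_1\cap H_2)\setminus H_3|\le 0
\]
one may drop this pair, and then apply $|H_i\cap H_3|\ge n_in_3|G|$ to the two remaining pairwise terms to obtain
\[
\frac{|H_1\cup H_2\cup H_3|}{|G|}\le n_1+n_2+n_3-n_1n_3-n_2n_3,
\]
which for $n_1,n_2,n_3\le\tfrac13$ gives at most $\tfrac13+\tfrac23(n_1+n_2)\le\tfrac79<\tfrac56$. This is exactly the paper's approach, and in fact this single inequality, followed by the dichotomy $n_3=\tfrac12$ versus $n_3\le\tfrac13$, handles the entire theorem uniformly without a separate coset argument.
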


\begin{proof}
Let $H_1$, $H_2$ and $H_3$ be three proper subgroups of $G$ and $n_i=\frac{|H_i|}{|G|}\,$, $i=1,2,3$. Then $n_i\in(0,\frac{1}{2}]$, $i=1,2,3$, and we can assume that $n_1\leq n_2\leq n_3$. One obtains
$$|H_1\cup H_2\cup H_3|=|(H_1\setminus H_3)\cup (H_2\setminus H_3)\cup H_3|=|(H_1\setminus H_3)\cup (H_2\setminus H_3)|+|H_3|$$
$$\hspace{6mm}=|H_1\setminus H_3|+|H_2\setminus H_3|-|(H_1\cap H_2)\setminus H_3|+|H_3|$$
$$\hspace{-28,5mm}\leq |H_1\setminus H_3|+|H_2\setminus H_3|+|H_3|$$
$$\hspace{-1,7mm}=|H_1|-|H_1\cap H_3|+|H_2|-|H_2\cap H_3|+|H_3|$$
$$\hspace{-4,4mm}=|H_1|-\frac{|H_1||H_3|}{|H_1H_3|}+|H_2|-\frac{|H_2||H_3|}{|H_2H_3|}+|H_3|$$
$$\hspace{-3mm}\leq |H_1|-\frac{|H_1||H_3|}{|G|}+|H_2|-\frac{|H_2||H_3|}{|G|}+|H_3|,$$which leads to
$$\frac{|H_1\cup H_2\cup H_3|}{|G|}\leq n_1+n_2+n_3-n_1n_3-n_2n_3.$$Observe that we cannot have $n_2=n_3=\frac{1}{2}\,$ since this would imply $G/H_2\cap H_3\cong C_2\times C_2$, that is $\sigma(G)=3$, a contradiction. So, we can assume that $n_2<\frac{1}{2}\,$. We infer that if $n_3=\frac{1}{2}\,$ then
$$n_1+n_2+n_3-n_1n_3-n_2n_3=\frac{1+n_1+n_2}{2}\leq\frac{1+\frac{2}{3}}{2}=\frac{5}{6}\,,$$while if $n_3<\frac{1}{2}\,$, i.e. $n_3\leq\frac{1}{3}\,$, then
$$n_1+n_2+n_3-n_1n_3-n_2n_3=(1-n_1-n_2)n_3+n_1+n_2\leq(1-n_1-n_2)\frac{1}{3}+n_1+n_2$$
$$\hspace{24,5mm}=\frac{1}{3}+\frac{2}{3}(n_1+n_2)\leq\frac{1}{3}+\frac{2}{3}\,\frac{2}{3}=\frac{7}{9}<\frac{5}{6}\,.$$Consequently, in both cases we have
$$\frac{|H_1\cup H_2\cup H_3|}{|G|}\leq\frac{5}{6}\,,$$as desired.

We remark that the equality holds if and only if $H_1\cap H_2\subseteq H_3$, $H_1H_3=H_2H_3=G$, $n_1=n_2=\frac{1}{3}\,$ and $n_3=\frac{1}{2}\,$, that is $H_1$, $H_2$, $H_3$ are distinct maximal subgroups, two of index $3$ and one of index $2$. Note that in this case we have $\sigma(G)=4$ by Theorem 1.1, c).
\end{proof}

The above proof also shows that for finite groups of odd order the constant $\frac{5}{6}\,$ can be replaced with $\frac{7}{9}\,$.

\begin{corollary}
If $G$ is a finite group of odd order with $\sigma(G)\neq 3$, then $|H_1\cup H_2\cup H_3|\leq\frac{7}{9}\,|G|$ for all proper subgroups $H_1$, $H_2$ and $H_3$ of $G$. Moreover, the equality holds if and only if $H_1$, $H_2$ and $H_3$ are distinct maximal subgroups of index $3$.
\end{corollary}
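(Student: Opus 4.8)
The plan is to re-run the argument from the proof of Theorem 2.2, feeding in one extra fact that comes for free once $|G|$ is odd: a finite group of odd order has no subgroup of index $2$ (such a subgroup would be normal with quotient of order $2$), so every proper subgroup of $G$ has index at least $3$. Hence, with $n_i=\frac{|H_i|}{|G|}$ as before, we now have the sharper bounds $n_i\leq\frac{1}{3}$ for $i=1,2,3$. This incidentally makes the hypothesis $\sigma(G)\neq 3$ redundant, since an odd-order group has no quotient isomorphic to $C_2\times C_2$; we keep it only to match the statement.

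First I would recall, unchanged, the chain of inequalities from the proof of Theorem 2.2 that leads, after assuming $n_1\leq n_2\leq n_3$, to
$$\frac{|H_1\cup H_2\cup H_3|}{|G|}\leq n_1+n_2+n_3-n_1n_3-n_2n_3 ;$$
nothing in that part of the argument uses the order of $G$. Rewriting the right-hand side as $(1-n_1-n_2)n_3+n_1+n_2$ and noting that $n_1+n_2\leq\frac{2}{3}<1$ makes the coefficient of $n_3$ positive, the expression is increasing in $n_3$, so using $n_3\leq\frac{1}{3}$ and then $n_1+n_2\leq\frac{2}{3}$ gives
$$(1-n_1-n_2)n_3+n_1+n_2\leq\frac{1-n_1-n_2}{3}+n_1+n_2=\frac{1}{3}+\frac{2}{3}(n_1+n_2)\leq\frac{1}{3}+\frac{2}{3}\cdot\frac{2}{3}=\frac{7}{9},$$
which is exactly the ``$n_3<\frac{1}{2}$'' branch already isolated in the proof of Theorem 2.2. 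So the inequality needs no new idea.

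For the equality clause I would trace the constraints backwards. Equality forces each intermediate inequality above to be an equality: the monotonicity step gives $n_3=\frac{1}{3}$, then $n_1+n_2=\frac{2}{3}$ and hence $n_1=n_2=\frac{1}{3}$; thus all three subgroups have index $3$ and so are maximal (if $H_i<K<G$ then $[G:K]$ divides $3$ and is $>1$, so $[G:K]=3$ and $K=H_i$). Equality in the set-theoretic estimates additionally forces $H_1\cap H_2\subseteq H_3$ and $H_1H_3=H_2H_3=G$; the last two conditions give $H_1\neq H_3$ and $H_2\neq H_3$, and together with $H_1\cap H_2\subseteq H_3$ they give $H_1\neq H_2$, so $H_1,H_2,H_3$ are three distinct maximal subgroups of index $3$. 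In the other direction one exhibits a group attaining the bound, e.g. $G=C_3\times C_3$ with $H_1,H_2,H_3$ any three of its four order-$3$ subgroups: the pairwise intersections are trivial (hence inside $H_3$) and $|H_1\cup H_2\cup H_3|=1+2+2+2=7=\frac{7}{9}|G|$.

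The whole thing is short, so there is no serious obstacle; the one point demanding care is the equality characterization. The conditions $n_1=n_2=n_3=\frac{1}{3}$ by themselves do not force equality --- one also needs the nesting $H_1\cap H_2\subseteq H_3$ --- so the ``if and only if'' is to be understood as: the maximum $\frac{7}{9}|G|$ is attained precisely by triples of distinct index-$3$ maximal subgroups with that nesting property, the prototypical extremal configuration being three of the four order-$3$ subgroups of $C_3\times C_3$. Once this is phrased correctly, everything follows from the bound already established for Theorem 2.2.
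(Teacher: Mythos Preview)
Your argument is exactly what the paper intends: it offers no separate proof of the corollary, merely the sentence ``the above proof also shows\ldots'', and you have spelled out precisely that --- odd order forces $n_i\le\frac{1}{3}$, so only the $n_3\le\frac{1}{3}$ branch of the Theorem~2.2.1 estimate is needed. Your caveat on the equality clause is in fact sharper than the paper's own formulation: the nesting $H_1\cap H_2\subseteq H_3$ is genuinely required and not automatic (three coordinate hyperplanes in $C_3^3$ are distinct index-$3$ maximal subgroups with $|H_1\cup H_2\cup H_3|=19<21=\tfrac{7}{9}|G|$), so the paper's ``if and only if'' is slightly loose and your reading is the correct one.
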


\subsection{An open problem}

We end this note by pointing out that our problem remains open for an arbitrary $k$.\newpage

\bigskip\noindent{\bf Open problem.} Let $k\geq 4$ be a positive integer and $G$ be a finite group that cannot be written as the union of $k$ proper subgroups. Does exist a constant $c_k\in (0,1)$ such that $|\cup_{i=1}^k H_i|\leq c_k|G|$, for all proper subgroups $H_1$, ..., $H_k$ of $G$? If affirmative, when the equality holds?
\bigskip

Notice that in this case we obtained
$$\frac{|\cup_{i=1}^k H_i|}{|G|}\leq\sum_{i=1}^k n_i-n_k\sum_{i=1}^{k-1}n_i,\0(*)$$where $n_i=\frac{|H_i|}{|G|}\,$, $i=1,2,...,k$, but we failed in giving an upper bound for the right side of $(*)$.
\bigskip

Finally, inspired by the results in the cases $k=2$ and $k=3$, we conjecture that $|\cup_{i=1}^k H_i|$ is maxim when $\sigma(G)=k+1$ and there is a maximal subgroup $M$ of $G$ such that $G=M\cup(\cup_{i=1}^k H_i)$ and $|M|\leq|H_i|,\,\forall\, i=1,...,k$.

\vspace*{5ex}\small

\hfill
\begin{minipage}[t]{5cm}
Marius T\u arn\u auceanu \\
Faculty of  Mathematics \\
``Al.I. Cuza'' University \\
Ia\c si, Romania \\
e-mail: {\tt tarnauc@uaic.ro}
\end{minipage}

\end{document}